\documentclass[12pt,dvips]{article}

\usepackage{amsmath,amsfonts,amssymb,amsthm,graphicx,verbatim,subfig}
\usepackage[all]{xy}

\ifx\pdftexversion\undefined

\usepackage[a4paper,colorlinks,link
=black,filecolor=black,citecolor=black,urlcolor=black,pdfstartview=FitH]{hyperref}
\else

\usepackage[a4paper,colorlinks,linkcolor=black,filecolor=black,citecolor=black,urlcolor=black,pdfstartview=FitH]{hyperref}
\fi

\font\sixbb=msbm6
\font\eightbb=msbm8
\font\twelvebb=msbm10 scaled 1095
\newfam\bbfam
\textfont\bbfam=\twelvebb \scriptfont\bbfam=\eightbb
                           \scriptscriptfont\bbfam=\sixbb

\newtheorem{theorem}{\bf Theorem}[section]
\newtheorem{claim}[theorem]{\bf Claim}
\newtheorem{conjecture}[theorem]{\bf Conjecture}

\newtheorem{definition}[theorem]{\bf Definition}

\title{An upper bound on the number of high-dimensional permutations}
\begin{document}
\author{Nathan Linial\thanks{Department of Computer Science, Hebrew University, Jerusalem 91904,
    Israel. e-mail: nati@cs.huji.ac.il~. Supported by ISF and BSF grants.}
  \and {Zur Luria\thanks{Department of Computer Science, Hebrew University, Jerusalem 91904,
    Israel. e-mail: zluria@cs.huji.ac.il~.}}
}

\date{}

\maketitle
\pagestyle{plain}

\begin{abstract}

What is the higher-dimensional analog of a permutation? If we think of a permutation as
given by a permutation matrix, then the following definition suggests itself:
A $d$-dimensional
permutation of order $n$ is an $n \times n \times \ldots n = [n]^{d+1}$ array of zeros and
ones in which every {\em line} contains a unique $1$ entry. A line here is a set of entries
of the form $\{(x_1,\ldots,x_{i-1},y,x_{i+1},\ldots,x_{d+1}) | n \ge y \ge 1\}$ for some index
$d+1 \ge i \ge 1$ and some choice of $x_j \in [n]$ for all $j \neq i$. It is easy to observe
that a one-dimensional permutation is simply a permutation matrix and that a two-dimensional permutation
is synonymous with an
order-$n$ Latin square. We seek an estimate for the number of $d$-dimensional
permutations. Our main result is the following upper bound on their number
\[
 \left((1+o(1))\frac{n}{e^d}\right)^{n^d}.
\]
We tend to believe that this is actually the correct number, but the problem of proving the complementary
lower bound remains open. Our main tool is an adaptation of Br\`{e}gman's \cite{Br73} proof of the Minc conjecture on permanents.
More concretely, our approach is very close in spirit to Schrijver's~\cite{Sch78} and Radhakrishnan's \cite{Ra97} proofs of Br\`{e}gman's theorem.

\end{abstract}

\section{Introduction}
The permanent of an $n\times n$ matrix $A = (a_{ij})$ is defined by
\[ Per (A) = \sum_{\sigma \in \mathbb{S}_n}{\prod_{i=1}^n{a_{i,\sigma_i}}} \]
Permanents have attracted a lot of attention~\cite{Minc}.
They play an important role in combinatorics. Thus if $A$
is a 0-1 matrix, then $Per(A)$ counts perfect matchings in the bipartite graph whose adjacency matrix is $A$.
They are also of great interest from the computational perspective.
It is $\#P$-hard to calculate the permanent of a given 0-1 matrix \cite{Va79},
and following a long line of research, an approximation scheme was found~\cite{J+S+V}
for the permanents of nonnegative matrices. Bounds on permanents have also been studied at great depth.
Van der Waerden conjectured that $ Per(A) \geq \frac{n!}{n^n}$ for every $n \times n$ doubly stochastic matrix $A$, and 
this was established more than fifty years later by
Falikman and by Egorychev~\cite{Fal, Eg}. More recently, Gurvitz~\cite{Gur} discovered a
new conceptual proof for this conjecture (see~\cite{Lau+Sch} for a very
readable presentation). What is more relevant for us here are upper bounds on permanents.
These are the subject of Minc's
conjecture which was proved by Br\`{e}gman.

\begin{theorem}  If $A$ is an $n\times n$ 0-1 matrix with $r_i$ ones in the $i$-th row, then
$$ Per(A) \leq \prod_{i=1}^n{(r_i!)^{1/r_i}} . $$
\label{BregmanThm}
\end{theorem}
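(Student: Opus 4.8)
The plan is to prove Theorem~\ref{BregmanThm} by an entropy-counting argument; either Radhakrishnan's entropy proof \cite{Ra97} or Schrijver's convexity proof \cite{Sch78} would serve, and I sketch the former, as it is the one closest in spirit to the high-dimensional argument to come. First I would recast the permanent combinatorially: since $A$ is a 0-1 matrix, $Per(A)$ is the number of permutations $\sigma \in \mathbb{S}_n$ with $a_{i,\sigma(i)} = 1$ for every $i$, equivalently the number of perfect matchings in the bipartite graph with adjacency matrix $A$. We may assume $r_i \ge 1$ for all $i$ and $Per(A) \ge 1$, since otherwise the inequality is immediate. Let $\sigma$ henceforth denote a \emph{uniformly random} such permutation. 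Then, with all logarithms natural, its Shannon entropy satisfies $H(\sigma) = \log Per(A)$, so it suffices to bound $H(\sigma)$ from above.

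Next I would introduce an independent, uniformly random linear order $\pi$ of the row set $[n]$, and reveal the values $\sigma(1),\dots,\sigma(n)$ one at a time, in the order dictated by $\pi$. Write $R_i = \{\, j : a_{ij} = 1 \,\}$, so $|R_i| = r_i$. When the process reaches row $i$, some columns have already been ``claimed'' by the previously revealed rows; let $k_i$ be the number of columns in $R_i$ not yet claimed at that moment. Because $\sigma$ is a genuine permutation, the column $\sigma(i)$ has not been claimed by any earlier row, so $k_i \ge 1$; and conditioned on $\pi$ together with everything revealed before row $i$, the value $\sigma(i)$ lies among these $k_i$ columns. Applying the chain rule for entropy in the $\pi$-order, and then the bound $H(X \mid Y) \le \mathbb{E}_Y\big[\log |\mathrm{supp}(X \mid Y)|\big]$, yields
\[
H(\sigma) \;=\; H(\sigma \mid \pi) \;=\; \sum_{i=1}^{n} H\!\left(\,\sigma(i) \mid \pi,\ \{\sigma(j) : j \text{ precedes } i \text{ in } \pi\}\,\right) \;\le\; \sum_{i=1}^{n} \mathbb{E}\!\left[\log k_i\right].
\]

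The heart of the matter is that $\mathbb{E}[\log k_i]$ is the same for every outcome of $\sigma$. Fix an outcome and put $S_i = \{\, \sigma^{-1}(c) : c \in R_i \,\}$, an $r_i$-element set of rows that contains $i$ (since $a_{i,\sigma(i)} = 1$ forces $\sigma(i) \in R_i$, hence $i = \sigma^{-1}(\sigma(i)) \in S_i$). A column $c \in R_i$ is still unclaimed when the process reaches row $i$ exactly when $\sigma^{-1}(c)$ does not precede $i$ in $\pi$; hence $k_i$ equals the number of elements of $S_i$ that occur no earlier than $i$ in $\pi$. But $\pi$ restricted to $S_i$ is a uniformly random ordering of $S_i$, so the rank of $i$ within $S_i$ is uniform on $\{1,\dots,r_i\}$, and therefore so is $k_i$. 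Thus $\mathbb{E}[\log k_i] = \frac{1}{r_i}\sum_{t=1}^{r_i}\log t = \frac{1}{r_i}\log(r_i!)$, and substituting into the display above and exponentiating gives $Per(A) \le \prod_{i=1}^{n}(r_i!)^{1/r_i}$.

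No single step above is difficult; the delicate point is the design of the random experiment. One must randomize the order in which the rows are processed (a fixed order yields a strictly weaker bound), reveal $\sigma$ rather than $\sigma^{-1}$, and exploit the permutation structure to guarantee $k_i \ge 1$ — this is precisely what makes the averaged per-row contribution equal to $\log\big((r_i!)^{1/r_i}\big)$ and not something larger. The same scheme is what we shall adapt to $d$-dimensional permutations, where the notion of a line being ``occupied'' becomes considerably more intricate; bounding the resulting conditional entropies is where the real work will lie.
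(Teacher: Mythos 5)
Your proposal is correct and is essentially the paper's own argument: it is Radhakrishnan's entropy proof, with a uniformly random permutation $\sigma$ supported by $A$, a chain-rule decomposition along an independent uniformly random row ordering $\pi$, and the observation that for each fixed $\sigma$ the number $k_i$ of surviving columns in $R_i$ is uniform on $\{1,\dots,r_i\}$ (your $S_i$ is the paper's $W_i\cup\{i\}$). No substantive differences.
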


In the next section we review Radhakrishnan's proof, which uses the entropy method.
Our plan is to imitate this proof for a $d$-dimensional analogue of the permanent. 
To this end we need the notion of $d$-dimensional permutations.

\begin{definition} 
\begin{enumerate}
 \item Let $A$ be an $[n]^d$ array. A \textit{line} of $A$ is vector of the form 
  $$ ( A(i_1, ... ,i_{j-1},t,i_{j+1}, ... ,i_d) )_{t=1}^n ,$$
where $1 \le j \le d$ and $i_1, ... , i_{j-1}, i_{j+1} , ... ,i_d \in [n]$.
 \item A $d$-dimensional permutation of order $n$ is an $ [n]^{d+1}$ array $P$ of zeros and ones such that every line of 
$P$ contains a single one and $n-1$ zeros. Denote the set of all $d$-dimensional permutations of order $n$ by $S_{d,n}$.
\end{enumerate}
\end{definition}

For example, a two dimensional array is a matrix. It has two kinds of lines,
usually called rows and columns. Thus a $1$-permutation is an $n \times n$ 0-1 matrix with a single one in each row and a 
single one in each column, namely a permutation matrix. A $2$-permutation is identical to a Latin square
and $S_{2,n}$ is the same as the set ${\cal L}_n$, of order-$n$ Latin squares.
We now explain the correspondence between the two sets.
If $X$ is a $2$-permutation of order $n$, then we associate with it a Latin square $L$, where $L(i,j)$ as the (unique) 
index of a $1$ entry in the line $A(i,j, \ast)$. For more on the subject of Latin squares, see \cite{VL+W}.
The same definition yields a one-to-one correspondence
between $3$-dimensional permutations and Latin cubes. In general, $d$-dimensional 
permutations are synonymous with $d$-dimensional Latin hypercubes. For more on $d$-dimensional
Latin hypercubes, see \cite{Wa+Mc}.
To summarize, the following is an equivalent definition of a $d$-dimensional permutation.
It is an $[n]^d$ array with entries from $[n]$ in which every line contains each $i \in [n]$ exactly once.
We interchange freely between these two definitions according to context.

Our main concern here is to estimate $|S_{d,n}|$, the number of $d$-dimensional permutations of
order $n$.  By Stirling's formula

\[
 |S_{1,n}| = n! = \left((1+o(1)) \frac{n}{e} \right)^{n}.
\]
As we saw, $|S_{2,n}|$ is the number of order $n$ Latin squares. The best known estimate~\cite{VL+W} is
\[
  |S_{2,n}| = |\cal{L}\rm_n| = \left((1+o(1)) \frac{n}{e^2} \right)^{n^2}.
\]
This relation is proved using bounds on permanents. Br\'egman's theorem for the upper bound, and the Falikman-Egorychev
theorem for the lower bound.

This suggests
\begin{conjecture}
\label{main:conjecture}
 \[
  |S_{d,n}| = \left((1+o(1)) \frac{n}{e^d} \right)^{n^d}.
\]
\end{conjecture}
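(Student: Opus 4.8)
This conjecture has two halves, and only the upper bound $|S_{d,n}|\le\left((1+o(1))\tfrac{n}{e^d}\right)^{n^d}$ seems within reach of the present tools; the matching lower bound I would leave open, with a word on it at the end. For the upper bound I would work with the symbol‑valued description of a $d$‑permutation — a map $f\colon[n]^d\to[n]$ whose restriction to each line is a bijection of $[n]$ — and adapt Radhakrishnan's entropy proof of Theorem~\ref{BregmanThm} to this setting, without invoking Theorem~\ref{BregmanThm} itself and without induction on $d$. Let $f$ be uniform in $S_{d,n}$, so $\log|S_{d,n}|=H(f)$, and pick a uniformly random total order $\prec$ of the $n^d$ base points, independent of $f$ (concretely: i.i.d.\ uniform weights $W_{\mathbf{x}}\in[0,1]$, ordered by weight). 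The chain rule for entropy along $\prec$, together with the bound ``conditional entropy $\le\log$ of support size'' applied term by term, yields
\[
 H(f)\ \le\ \sum_{\mathbf{x}\in[n]^d}\mathbb{E}\bigl[\log A_{\mathbf{x}}\bigr],
\]
where $A_{\mathbf{x}}:=n-|\{\,f(\mathbf{y}):\mathbf{y}\text{ a line-neighbour of }\mathbf{x}\text{ with }\mathbf{y}\prec\mathbf{x}\,\}|$ is the number of symbols still admissible at $\mathbf{x}$ when it is processed (a \emph{line-neighbour} of $\mathbf{x}$ being a base point differing from $\mathbf{x}$ in exactly one coordinate).

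The argument then comes down to estimating $\mathbb{E}[\log A_{\mathbf{x}}]$. I would fix $\mathbf{x}$ and condition on its arrival time $W_{\mathbf{x}}=u$. The crucial combinatorial observation is that, since $f$ is a bijection on each of the $d$ lines through $\mathbf{x}$, every symbol $s\ne f(\mathbf{x})$ occurs at exactly $d$ of the $d(n-1)$ line-neighbours of $\mathbf{x}$ — one per line — so the $d$-element sets $G_s$ (over $s\ne f(\mathbf{x})$) \emph{partition} the neighbour set. Hence $s$ is still admissible at $\mathbf{x}$ iff every point of $G_s$ has weight exceeding $u$; and since the $G_s$ are pairwise disjoint and the weights are i.i.d., these $n-1$ events are independent, each of probability $(1-u)^d$. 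Thus, conditionally on $W_{\mathbf{x}}=u$ and for \emph{every} $f$, the variable $A_{\mathbf{x}}$ is distributed as $1+\mathrm{Bin}\!\left(n-1,(1-u)^d\right)$, and by Jensen's inequality (concavity of $t\mapsto\log(1+t)$),
\[
 \mathbb{E}[\log A_{\mathbf{x}}]=\int_0^1\mathbb{E}\Bigl[\log\bigl(1+\mathrm{Bin}(n-1,(1-u)^d)\bigr)\Bigr]\,du\ \le\ \int_0^1\log\!\bigl(1+(n-1)(1-u)^d\bigr)\,du.
\]
An elementary estimate of the last integral — splitting the range at $u=1-n^{-1/d}$ and comparing with $\int_0^1\log\bigl(n(1-u)^d\bigr)\,du=\log n-d$ — shows it equals $\log n-d+o(1)$ for fixed $d$ as $n\to\infty$. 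Summing over the $n^d$ base points gives $\log|S_{d,n}|\le n^d\bigl(\log n-d+o(1)\bigr)$, i.e.\ exactly the asserted bound.

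The point I expect to be the real crux is not any single computation above but recognising that the factor $e^d$ must be extracted this way: the ``obvious'' route of revealing $f$ one level set at a time — each level set $f^{-1}(k)$ being a $(d-1)$-permutation — and bounding each slice count by the $(d-1)$-dimensional total loses a factor of $e$ and yields only $n/e^{\,d-1}$; the saving comes from conditioning on the arrival time $u$ together with the independence of the $n-1$ admissibility events, which rests on the $G_s$ being disjoint. The genuine obstacle to the full conjecture, however, is the complementary lower bound. For $d\le2$ it follows from the Falikman--Egorychev theorem via permanents of (essentially) doubly stochastic matrices, but for $d\ge3$ I know of no construction, no van der Waerden-type inequality for an appropriate ``$d$-dimensional permanent'', and no entropy-concentration argument for a uniformly random $d$-permutation that would produce $\left((1-o(1))\tfrac{n}{e^d}\right)^{n^d}$ of them. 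I would attempt a Gurvits-style capacity lower bound for a multilinear form enumerating $d$-permutations, but I expect that part to require genuinely new ideas.
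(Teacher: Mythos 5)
Note first that the statement is a conjecture: the paper itself proves only the upper bound (Theorem~\ref{mainResult}), gives a crude lower bound $\exp(\Omega(n^d))$, and leaves the matching lower bound open, which is exactly the position you take, so you are not missing anything the paper has. For the upper bound your argument is correct, but it follows a genuinely different route from the paper's. The paper proves a general Br\`{e}gman-type bound for arbitrary $0$-$1$ arrays (Theorem~\ref{generalBregman}) by ordering the lines via a random \emph{lexicographic} order built from $d$ permutations, running an induction on $d$ that produces the recursively defined $f(d,r)$, and then separately establishing the technical estimate $f(d,n)\le \log n-d+c_d\log^d(n)/n$ with explicit constants. You instead order the $n^d$ cells by i.i.d.\ uniform arrival times and use the fact that for a genuine $d$-permutation the neighbours carrying a fixed symbol $s\ne f(\mathbf{x})$ form pairwise disjoint $d$-element blocks, so that conditionally on $W_{\mathbf{x}}=u$ the number of admissible symbols is exactly $1+\mathrm{Bin}\bigl(n-1,(1-u)^d\bigr)$ independently of $f$; Jensen and the elementary estimate $\int_0^1\log\bigl(1+(n-1)t^d\bigr)dt=\log n-d+o(1)$ then finish, and all these steps check out (the averaging over the independent random order is legitimate, and the disjointness of the blocks is precisely what justifies the independence of the $n-1$ admissibility events). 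Your route buys a shorter proof with no induction on $d$ and no constant-chasing; what it gives up is generality, since the exact binomial law relies on every symbol appearing exactly once per line, so you bound $|S_{d,n}|$ (the all-ones array) but do not recover the paper's array-dependent bound $Per_d(A)\le\prod_{\mathbf{i}}e^{f(d,r_{\mathbf{i}})}$, which is its $d$-dimensional analogue of Br\`{e}gman's theorem. Your remarks on the lower bound also agree with the paper, which points out that the naive van der Waerden approach fails because multi-stochastic arrays can have vanishing $d$-permanent.
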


In this paper we prove the upper bound

\begin{theorem}
\label{mainResult}
 \[
  |S_{d,n}| \le \left((1+o(1)) \frac{n}{e^d} \right)^{n^d}.
\]
\end{theorem}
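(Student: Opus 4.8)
The plan is to prove Theorem~\ref{mainResult} by induction on $d$, where the inductive statement is a Br\`egman-type upper bound for a \emph{list} version of the problem. Fix $n$ throughout. Given a $0$-$1$ array $A$ on $[n]^{d+1}$, whose support we think of as the ``admissible'' positions, call a set $T$ contained in the support of $A$ an \emph{$A$-admissible configuration} if every line of $[n]^{d+1}$ contains exactly one point of $T$; when $A$ is the all-ones array these are precisely the elements of $S_{d,n}$. Write $\rho_\ell(A)$ for the number of $1$'s of $A$ on a line $\ell$. Define a function $h_d$ on the non-negative integers by ``iterating geometric means $d$ times'': $h_1(r)=(r!)^{1/r}=\bigl(\prod_{t=1}^{r}t\bigr)^{1/r}$, and $h_d(r)=\bigl(\prod_{t=1}^{r}h_{d-1}(t)\bigr)^{1/r}$ for $d\ge2$, $r\ge1$, with $h_d(0)=0$. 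The claim $P(d)$ is that for every such $A$,
\[
 \#\{A\text{-admissible configurations}\}\ \le\ \prod_{\ell}h_d\bigl(\rho_\ell(A)\bigr),
\]
where $\ell$ ranges over the lines of $[n]^{d+1}$ in one fixed coordinate direction --- say, those obtained by varying the first coordinate. The base case $P(1)$ is exactly Br\`egman's Theorem~\ref{BregmanThm}: an $A$-admissible configuration in $[n]^2$ is a permutation matrix dominated by $A$, so their number is $\operatorname{Per}(A)\le\prod_i(r_i!)^{1/r_i}$, and since $\operatorname{Per}(A)=\operatorname{Per}(A^{\mathsf T})$ the product may be taken over the lines of either direction.

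For the step $P(d-1)\Rightarrow P(d)$, fix $A$ on $[n]^{d+1}$, let $T$ be a uniformly random $A$-admissible configuration, so $\log(\text{count})=H(T)$ (Shannon entropy), and slice $T$ along the last coordinate: for $s\in[n]$ put $T_s=\{\mathbf x\in[n]^d:(\mathbf x,s)\in T\}$, with $A_s$ the corresponding slice of $A$. A direct check shows each $T_s$ is an $A_s$-admissible configuration in $[n]^d$ and that the $T_s$ partition $[n]^d$. Now reveal the slices in a uniformly random order $\tau$ of $[n]$ independent of $T$, and expand
\[
 H(T)=H(T\mid\tau)=\sum_{i=1}^{n}H\bigl(T_{\tau(i)}\mid T_{\tau(1)},\dots,T_{\tau(i-1)},\tau\bigr).
\]
Given the past, any admissible value for $T_{\tau(i)}$ is an $A'$-admissible configuration in $[n]^d$, where $A'$ is $A_{\tau(i)}$ with the cells already occupied by $T_{\tau(1)},\dots,T_{\tau(i-1)}$ deleted; hence $P(d-1)$ gives $H(T_{\tau(i)}\mid\text{past})\le \mathbb E\bigl[\sum_{L}\log h_{d-1}(\rho_L(A'))\bigr]$, the sum over first-direction lines $L$ of $[n]^d$. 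Summing over $i$ and reindexing by $s=\tau(i)$, the task becomes to evaluate, for each first-direction line $L$ of $[n]^d$ and each symbol $s$, the expectation of $\log h_{d-1}(\rho_L(A'))$, where $\rho_L(A')$ now counts the cells of $L$ that are admissible for $s$ and still free at the moment slice $s$ is revealed.

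This is where Radhakrishnan's argument is used. Looking at $T$ on the two-dimensional slab $L\times[n]\subseteq[n]^{d+1}$ (coordinates: the varying coordinate of $L$, and the symbol), its rows and columns are lines of $[n]^{d+1}$, so $T$ meets the slab in a permutation matrix; thus $\mathbf x\mapsto(\text{value of }T\text{ at }\mathbf x)$ is a bijection from $L$ to $[n]$. Consequently the $r:=\rho_{L\times\{s\}}(A)$ cells of $L$ admissible for $s$ carry $r$ distinct values, one of which is $s$ itself, and such a cell is still free when slice $s$ is revealed precisely when its value does not precede $s$ in $\tau$; so $\rho_L(A')$ is the rank of $s$, counted from the end, inside this $r$-element set of values. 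For a uniformly random $\tau$ that rank is uniform on $\{1,\dots,r\}$, so $\mathbb E[\log h_{d-1}(\rho_L(A'))]=\tfrac1r\sum_{t=1}^{r}\log h_{d-1}(t)=\log h_d(r)$. Summing over the pairs $(L,s)$, which correspond bijectively to the first-direction lines of $[n]^{d+1}$, yields $H(T)\le\sum_\ell\log h_d(\rho_\ell(A))$; this is $P(d)$. Finally, taking $A$ all-ones gives $|S_{d,n}|\le h_d(n)^{n^d}$, and it remains to see $h_d(n)=(1+o(1))\,n/e^d$ for each fixed $d$: Stirling gives $\tfrac1n\sum_{t=1}^{n}\log t=\log n-1+O(\tfrac{\log n}{n})$, so from $\log h_d(n)=\tfrac1n\sum_{t=1}^{n}\log h_{d-1}(t)$ an easy induction gives $\log h_d(n)=\log n-d+O_d\!\bigl(\tfrac{(\log n)^{d}}{n}\bigr)$ (the errors accumulate harmlessly since $\sum_{t\le n}(\log t)^j/t=O((\log n)^{j+1})$), and exponentiating finishes the proof.

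The step that calls for an idea is the choice of inductive statement. It does not suffice to induct on a bound for the \emph{number} of configurations of a given order: the slicing reduces a $d$-dimensional count to $(d-1)$-dimensional counts for arrays with varying line-densities, and only the full product-over-lines (Br\`egman-shaped) bound for that list version is strong enough to feed back into the recursion --- a single number loses too much at each level. Granting the right statement, the technical heart is the entropy computation of the inductive step, above all the recognition that each ``line $\times$ symbol'' slab collapses to the one-dimensional uniform-rank phenomenon from Radhakrishnan's proof. By contrast the asymptotic evaluation of $h_d(n)$ and the treatment of lines with very few admissible cells (which also covers the regime of small $n$) are routine.
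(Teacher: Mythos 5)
Your proposal is correct, and it reaches the theorem by a genuinely different organization of the argument. First note that your $h_d$ is exactly $e^{f(d,\cdot)}$, so your inductive statement $P(d)$ coincides with the paper's Theorem~\ref{generalBregman} (stated for lines in one fixed direction, which is equivalent by symmetry), and your final asymptotic $\log h_d(n)=\log n-d+O_d\bigl(\log^d(n)/n\bigr)$ is the same estimate the paper proves for $f(d,n)$ in Section~4. The difference is in how the Br\`egman-type bound is established. The paper runs a single entropy computation: the chain rule is applied to all $n^d$ variables $X_{i_1,\dots,i_d}$ (one per line $A(i_1,\dots,i_d,\ast)$), revealed in a random lexicographic order generated by $d$ independent permutations of the first $d$ coordinates, and an inner induction (Claim~\ref{fStructure}) shows that the expected logarithm of the number of surviving symbols in a line equals $f(d,r)$, peeling off one direction of ``shade'' at a time. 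You instead induct on $d$ at the level of the counting theorem itself: you slice the random permutation into its $n$ symbol layers $T_s$ (each a $(d-1)$-permutation), reveal whole layers in a uniformly random order, bound each layer's conditional entropy via $P(d-1)$ applied to the residual array $A'$, and use the Radhakrishnan rank-is-uniform observation on the slab $L\times[n]$ (where $T$ restricts to a permutation matrix) to average, so the recursion $\log h_d(r)=\tfrac1r\sum_{t=1}^{r}\log h_{d-1}(t)$ appears at the level of the theorem rather than inside a distributional claim. Your route uses Theorem~\ref{BregmanThm} as a black-box base case, needs only one random permutation per level, and makes transparent why the full list-version (general $A$) statement is the right induction hypothesis --- for the all-ones array the residual line densities after $k$ layers are deterministically $n-k$, and the randomness is only needed because the recursion forces general residual arrays at lower levels. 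The paper's route, in exchange, is a single self-contained argument that visibly generalizes Radhakrishnan's proof cell by cell and identifies the exact distribution of the number of available choices. Your treatment of small $t$ in the asymptotics (via $0\le\log h_{d-1}(t)\le\log t$) is indeed routine, as you say; I see no gap.
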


As mentioned, our method of proof is an adaptation of~\cite{Ra97}. We first need

\begin{definition} 
\begin{enumerate}
\item 
An $[n]^{d+1}$ 0-1 array $M_1$ is said to support an array $M_2$ if 
$$ M_2(i_1, ... ,i_{d+1}) = 1 \Rightarrow M_1(i_1, ... ,i_{d+1}) = 1 .$$ 
 \item The $d$-permanent of a $[n]^{d+1}$  0-1 array $A$ is
$$ Per_d(A) = \mbox{The number of $d$-dimensional permutations supported by $A$} .$$ 
\end{enumerate}
\end{definition}

Note that in the one-dimensional case, this is indeed the
usual definition of $Per(A)$. It is not hard to see
that for $d=1$ following theorem coincides with Br\`{e}gman's theorem.

\begin{theorem}
Define the function $f:\mathbb{N}_{\geq 0} \times \mathbb{N} \longrightarrow \mathbb{R}$ recursively by:
\begin{itemize}
 \item $f(0,r) = \log(r)$, where the logarithm is in base $e$.
 \item $f(d,r) = \frac{1}{r} \sum_{k=1}^r{f(d-1,k)}$.
\end{itemize}
Let $A$ be an $[n]^{d+1}$ 0-1 array with $r_{i_1 ,... i_d}$ ones in the line $A(i_1, ... ,i_d,\ast)$. Then
$$Per_d(A) \leq \prod_{i_1, ... ,i_d}{e^{f(d,r_{i_1 ,... i_d})}} .$$
\label{generalBregman}
\end{theorem}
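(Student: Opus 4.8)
The plan is to imitate Radhakrishnan's entropy proof of Br\`egman's theorem, generalizing it from $d=1$ to arbitrary $d$. Fix a $0$-$1$ array $A$ as in the statement, and let $P$ be a $d$-dimensional permutation chosen uniformly at random from the $Per_d(A)$ permutations supported by $A$. Since $P$ is determined by specifying, for each of the $n^d$ lines $A(i_1,\dots,i_d,\ast)$, the location of its unique $1$ entry, the entropy $H(P)$ equals $\log Per_d(A)$; so it suffices to show $H(P)\le\sum_{i_1,\dots,i_d} f(d,r_{i_1,\dots,i_d})$. I would expand $H(P)$ by the chain rule, revealing the $1$-locations of the $n^d$ lines in a \emph{random order}: pick a uniformly random bijection $\pi$ from the set of lines to $[n^d]$, independent of $P$, and write $H(P)=H(P\mid\pi)=\sum_{L} \mathbb{E}_\pi H\big(X_L \mid X_{L'} \text{ for } \pi(L')<\pi(L),\ \pi\big)$, where $X_L\in[n]$ is the position of the $1$ in line $L$.

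The heart of the argument is to bound each conditional entropy term. When we come to reveal line $L$, some of the $n$ entries of $L$ have already been "killed" by previously-revealed lines: an entry of $L$ is forbidden if it lies in a line $L'$ with $\pi(L')<\pi(L)$ whose revealed $1$ is not at that entry, or — and this is the genuinely new feature for $d\ge 2$ — if a chain of already-committed constraints forces a $0$ there. Let $N_L$ be the (random) number of entries of $L$ still available; then $H(X_L\mid\cdots)\le \mathbb{E}[\log N_L]$. In the one-dimensional case Br\`egman/Radhakrishnan show that, conditioned on the value of $X_L$, the number of lines through the chosen $1$-cell that precede $L$ in the order $\pi$ is uniform on $\{0,1,\dots,r_L-1\}$, which yields the bound $\frac1{r_L}\sum_{k=1}^{r_L}\log k = f(1,r_L)$. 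Here I would set up the analogous computation dimension by dimension: the $r_L-1$ \emph{other} $1$-cells competing with $X_L$ inside its various $(d)$-dimensional "slabs" get revealed in a random relative order, and a careful accounting of which of them precede $L$ and how many candidates each one removes produces exactly the nested average defining $f(d,r)=\frac1r\sum_{k=1}^r f(d-1,k)$ — the recursion on $d$ mirrors the recursion on the number of coordinate directions one must account for.

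Concretely, I would prove by induction on $d$ the key lemma: for a fixed line $L$ with $r_L=r$, $\mathbb{E}_\pi\big[\log N_L \,\big|\, X_L\big]\le f(d,r)$, where the expectation is over the random order $\pi$ and the randomness of $P$ with $X_L$ conditioned. The base case $d=0$ is trivial ($N_L=r$ deterministically, matching $f(0,r)=\log r$), and Br\`egman's theorem is the case $d=1$. For the inductive step, I would condition further on the order in which the $r-1$ rival $1$-entries of the slabs containing $X_L$ are processed relative to $L$; given that exactly $k-1$ of them come before $L$, the count $N_L$ of survivors is controlled — after discarding the committed directions — by a $(d-1)$-dimensional instance of the same problem, to which the induction hypothesis applies, giving an inner bound $f(d-1,k)$; averaging uniformly over $k\in\{1,\dots,r\}$ gives $f(d,r)$. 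Summing over all $n^d$ lines then yields $H(P)\le\sum f(d,r_{i_1,\dots,i_d})$, hence $Per_d(A)\le\prod e^{f(d,r_{i_1,\dots,i_d})}$.

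The main obstacle I anticipate is making the "$(d-1)$-dimensional instance" reduction precise. Unlike the $d=1$ case, where removing a $1$-cell simply deletes a row and column, in higher dimensions a single committed $1$-entry eliminates candidates along $d$ distinct line-directions, and the pattern of eliminated entries within line $L$ is not obviously that of a clean lower-dimensional permanent problem; one must argue that, conditioned appropriately, the relevant "available" sub-structure is distributed so that the induction hypothesis genuinely applies, and that the uniform-order argument still delivers the arithmetic mean over $k=1,\dots,r$ rather than something larger. Verifying that the convexity/monotonicity properties of $f$ (e.g. that $f(d,\cdot)$ is concave and increasing, which is needed to push the conditional bounds through Jensen-type steps) hold at every level of the recursion is the technical crux, and I would isolate it as a separate lemma proved by induction on $d$ before assembling the entropy computation.
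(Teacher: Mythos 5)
Your overall strategy --- take $X$ uniform on the permutations supported by $A$, reveal the lines $A(i_1,\dots,i_d,\ast)$ in a random order, apply the chain rule, and bound each conditional entropy term by the expected logarithm of the number of surviving candidates --- is exactly the paper's. But there is one concrete gap that breaks the argument: you order the lines by a \emph{uniformly random} bijection $\pi$ onto $[n^d]$, whereas the proof needs a random \emph{lexicographic} ordering, induced by $d$ independent uniform permutations $\sigma_1,\dots,\sigma_d\in\mathbb{S}_n$ applied to the $d$ coordinates in a fixed hierarchy. The recursion $f(d,r)=\frac1r\sum_{k=1}^r f(d-1,k)$ is not a generic consequence of averaging over orders; it is an exact fingerprint of that lexicographic structure. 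Under it, the shade cast on line $\mathbf{i}$ from direction $1$ is governed by $\sigma_1$ alone and removes a number of candidates uniform on $\{0,\dots,r-1\}$ (exactly as in the one-dimensional case), and, conditioned on the surviving set $W'$, the shade from directions $2,\dots,d$ comes only from lines inside the slab $i_1=\mathrm{const}$, which carries a $(d-1)$-dimensional permutation ordered by $\sigma_2,\dots,\sigma_d$; induction then gives the exact identity $\mathbb{E}_{\prec}\left[\log N_{\mathbf{i}}\right]=f(d,r)$, with no concavity or Jensen step required.

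With a uniform $\pi$ this nesting fails, and the quantity you would compute is in general strictly larger than $f(d,r)$, so the argument cannot deliver the stated bound. A minimal example: $d=2$, $r=2$. The single rival value $j$ of $X_{\mathbf{i}}$ is shaded by two lines, one per direction; under the lexicographic order each precedes $\mathbf{i}$ independently with probability $\tfrac12$, so $j$ survives with probability $\tfrac14$ and $\mathbb{E}[\log N]=\tfrac14\log 2=f(2,2)$. Under a uniform order on the three relevant lines, $j$ survives only when $\mathbf{i}$ comes first, probability $\tfrac13$, giving $\tfrac13\log 2>f(2,2)$. So the ``careful accounting'' step as you describe it cannot produce the claimed nested average, and the obstacle you flag at the end --- making the $(d-1)$-dimensional reduction precise --- is real; its resolution is precisely the switch to the lexicographic ordering. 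Your secondary worries (chains of forced zeros, concavity of $f$) are not needed: for the upper bound one only uses that $X_{\mathbf{i}}$ avoids directly shaded entries, and the key claim is an equality rather than an inequality pushed through Jensen.
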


We will derive below fairly tight bounds on the function $f$ that appears in theorem~\ref{generalBregman}.
It is then an easy matter to prove theorem \ref{mainResult} by applying theorem~\ref{generalBregman} to the all-ones array.

What about proving a matching lower bound on $S_{d,n}$ (and thus proving conjecture~\ref{main:conjecture})?
In order to follow the footsteps of~\cite{VL+W}, we would need a lower bound on $Per_d{A}$, namely,
a higher-dimensional analog of the van der Waerden conjecture. The entries of a {\em multi-stochastic} array
are nonnegative reals and the
sum of entries along every line is $1$. This is the higher-dimensional counterpart
of a doubly-stochastic matrix. It should be clear how to extend the notion of $Per_d(A)$ to real-valued arrays.
In this approach we would need a lower bound on $Per_d(A)$ that holds for every multi-stochastic array $A$.
However, this attempt (or at least its most simplistic version) is bound to fail.
An easy consequence of Hall's theorem says that a 0-1
matrix in which every line or column contains the same (positive)
number of 1-entries, has a {\em positive} permanent. (We still do not know exactly 
how small such a permanent can be, see~\cite{Lau+Sch} for more on this).
However, the higher dimensional analog of this is simply incorrect. There exist multi-stochastic arrays
whose $d$-permanent vanishes, as can easily be deduced e.g., from~\cite{Koch}.

We can, however, derive a lower bound of $|S_{d,n}| \ge \exp(\Omega(n^d))$ for even $n$. 
Consider the following construction: Let $n$ be an even integer, and let $P$ be a $d$-dimensional permutation 
of order $\left[ \frac{n}{2} \right] ^{d}$. It is easy to see that such a $P$ exists. Simply set 
$$ P(i_1, ... ,i_d) = (i_1 + ... + i_d) \mod \frac{n}{2} .$$
Now we construct a $d$-dimensional permutation $Q$ of order $[n]^d$ by replacing each element of $P$ with a $[2]^d$
block. If $P(i_1,...,i_d) = j$, then the corresponding block contains the values $j$ and 
$j+\frac{n}{2}$. It is easy to see that there are exactly two ways to arrange these values in each block, and that $Q$ is indeed a $d$-dimensional permutation
of order $[n]^d$. There are $\left( \frac{n}{2}\right)^d$ blocks, and so the number of possible $Q$'s is 
$2^{\left(\frac{n}{2}\right)^d}$. For a constant $d$ this is $\exp(\Omega(n^d))$. 

In section 2 we present Radhakrishnan's proof of the Br\`{e}gman bound. In section 3 we prove theorem \ref{generalBregman}.
In section 4 we use this bound to prove theorem \ref{mainResult}.  

\section{Radhakrishnan's proof of Br\`{e}gman's theorem}

\subsection{Entropy - Some basics}

We review the basic material concerning entropy that is used here
and refer the reader to~\cite{CT91} for further information on the topic.

\begin{definition} The \textit{entropy} of a discrete random variable $X$ is given by
$$ H(X) = \sum_x{\Pr(X=x) \log\left( \frac{1}{\Pr(X=x)} \right)} .$$
For random variables $X$ and $Y$, the \textit{conditional entropy of X given Y} is 
$$ H(X | Y) = \mathbb{E}[H(X|Y=y)] = \sum_y{\Pr(Y=y)H(X|Y=y)} .$$
\end{definition}
In this paper we will always consider the base $e$ entropy of $X$ which simply means that the logarithm is in base $e$.
\begin{theorem}
\begin{enumerate}
 \item If $X$ is a discrete random variable, then $$ H(X) \leq \log | range(X) | ,$$
       with equality iff $X$ has a uniform distribution.
 \item If $X_1, ... ,X_n$ is a sequence of random variables, then $$ H(X_1, ... ,X_n) = \sum_{i=1}^n{H(X_i|X_1, ... ,X_{i-1})} .$$
 \item The inequality $$ H(X|Y) \leq H(X|f(Y))$$ holds for every two discrete
random variables $X$ and $Y$ and every real function $f(\cdot)$. 
\end{enumerate}
\label{entropyThm}
\end{theorem}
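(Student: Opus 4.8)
The plan is to derive all three parts from a single analytic ingredient, the concavity of the natural logarithm (equivalently, from Jensen's inequality and the nonnegativity of mutual information), and otherwise to proceed by direct manipulation of the defining sums. Throughout I would write $p(x) = \Pr(X=x)$ and adopt the standard convention $0\log 0 = 0$, so that only atoms of positive probability contribute to any entropy and the sums are well defined; keeping track of this convention is the one bookkeeping point that pervades all three arguments.

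For part (1) I would view $H(X) = \mathbb{E}\left[\log \frac{1}{p(X)}\right]$ as an expectation under $X$ and apply Jensen's inequality to the concave function $\log$, obtaining $H(X) \le \log \mathbb{E}\left[\frac{1}{p(X)}\right]$. Since $\mathbb{E}\left[\frac{1}{p(X)}\right] = \sum_{x : p(x)>0} 1 = |\{x : p(x)>0\}| \le |range(X)|$, this gives $H(X) \le \log|range(X)|$. For the equality case I would invoke strict concavity of $\log$ to conclude that Jensen is tight only when $\frac{1}{p(X)}$ is almost surely constant, i.e. $X$ is uniform on its support, and then observe that matching the bound $\log|range(X)|$ additionally forces the support to be all of $range(X)$, so that $X$ is uniform on its entire range.

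For part (2) I would first prove the two-variable identity $H(X,Y) = H(X) + H(Y\mid X)$ and then induct. The identity follows by writing $p(x,y) = p(x)\,p(y\mid x)$ whenever $p(x,y)>0$, so that $\log\frac{1}{p(x,y)} = \log\frac{1}{p(x)} + \log\frac{1}{p(y\mid x)}$; splitting the defining sum of $H(X,Y)$ along this decomposition produces $H(X)$ from the first term and exactly $\sum_x p(x)\,H(Y\mid X=x) = H(Y\mid X)$ from the second. Treating $(X_1,\dots,X_{i-1})$ as a single random variable and applying this repeatedly (formally, by induction on $n$) yields the chain rule $H(X_1,\dots,X_n) = \sum_{i=1}^n H(X_i\mid X_1,\dots,X_{i-1})$.

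For part (3) the key observation is that, since $f(Y)$ is a deterministic function of $Y$, conditioning on $Y$ already determines $f(Y)$, whence $H(X\mid Y) = H(X\mid Y, f(Y))$. It therefore suffices to show that conditioning on additional information never increases entropy, namely $H(X\mid Y, f(Y)) \le H(X\mid f(Y))$. Fixing the value $z = f(Y)$ and averaging over $z$ reduces this to the unconditional statement $H(X\mid Y) \le H(X)$, which is equivalent to the nonnegativity of the mutual information $I(X;Y) = H(X) - H(X\mid Y)$. I would prove $I(X;Y)\ge 0$ by the same Jensen step as in part (1): $-I(X;Y) = \sum_{x,y} p(x,y)\log\frac{p(x)p(y)}{p(x,y)} \le \log\sum_{x,y} p(x)p(y) = \log 1 = 0$. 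I do not expect any genuine obstacle here; the only delicate points are the handling of zero-probability atoms under the $0\log 0 = 0$ convention and the two-sided equality analysis in part (1). These facts are entirely standard (see \cite{CT91}), and the statement is recorded mainly to fix notation and conventions for the entropy computation that drives the rest of the paper.
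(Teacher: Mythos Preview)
Your proof is correct and entirely standard. Note, however, that the paper does not actually prove this theorem: it is stated as background material with a reference to Cover and Thomas~\cite{CT91}, and no argument is given. Your write-up therefore goes beyond what the paper does, supplying the usual Jensen-based proofs for the upper bound, the chain rule, and the data-processing inequality; there is nothing to compare against on the paper's side.
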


The following is a general approach using entropy that is useful for a variety
of approximate counting problems. Suppose that we need to estimate the cardinality of
some set $S$. If $X$ is a random variable which takes values in $S$ under the uniform distribution on $S$,
then $H(X) = \log(|S|)$. So, a good estimate on $H(X)$ yields bounds on $|S|$.

This approach is the main idea of both Radhakrishnan's proof and our work.

\subsection{Radhakrishnan's proof}

Let $A$ be an $n \times n$ 0-1 matrix with $r_i$ ones in the $i$-th row.
Our aim is to prove the upper bound
$$Per(A) \leq \prod_{i=1}^n{(r_i !)^{\frac{1}{r_i}}} .$$

Let $\cal{M}$ be the set of permutation matrices supported by $A$, and let $X$ be a uniformly sampled random element of $\cal{M}$.
Our plan is to evaluate $H(X)$ using the chain rule and estimate $|\cal{M}|$
using the fact (theorem \ref{entropyThm}) that $H(X) = \log(|\cal{M}|)$.

Let $X_i$ be the unique index $j$ such that $X(i,j) = 1$. We consider a process where we scan the rows
of $X$ in sequence and estimate $H(X) = H(X_1, ... ,X_n)$ using the chain rule
in the corresponding order. To carry out this plan, we need to
bound the contribution of the term involving $X_i$ conditioned on the 
previously observed rows. That is, we write
$$ H(X) = \sum_{i=1}^n{H(X_i | X_1, ... ,X_{i-1})} .$$

Let $R_i$ be the set of indices of the 1-entries in $A$'s $i$-th row. That is, 
$$ R_i = \{ j:A(i,j)=1 \} .$$
Let 
$$ Z_i = \{ j \in R_i: X_{i'} = j \text{ for some } i'<i \} .$$
Note that $X_i \in R_i$, because $X$ is supported by $A$. In addition, given that we have already exposed the values $X_{i'}$ for
$i'<i$, it is impossible for $X_i$ to take any value $j \in Z_i$, or else
the column $X(\ast,j)$ contains more than a single $1$-entry. Therefore, given 
the variables that precede it, $X_i$ must take a value in $R_i \smallsetminus Z_i$. The cardinality $N_i = |R_i \smallsetminus Z_i|$
is a function of $X_1, ... ,X_{i-1}$ and so by theorem \ref{entropyThm}, 

$$ H(X) = \sum_{i=1}^n{H(X_i | X_1, ... ,X_{i-1} ) } $$ 
$$ = \sum_{i=1}^n{ \sum_{x_1, ... ,x_{i-1}}{\Pr(X_1 = x_1, ... ,X_{i-1}=x_{i-1}) H(X_i | X_1=x_1, ... ,X_{i-1}=x_{i-1} )} } $$
$$ \leq  \sum_{i=1}^n{ \sum_{x_1, ... ,x_{i-1}}{\Pr(X_1 = x_1, ... ,X_{i-1}=x_{i-1}) \log(N_i)} } $$
$$ = \sum_{i=1}^n{\mathbb{E}_{X_1, ... ,X_{i-1}}\left[\log(N_i) \right]}
 = \sum_{i=1}^n{\mathbb{E}_X\left[ \log(N_i) \right]} .$$

It is not clear how we should proceed from here, for how can we bound $\log(N_i)$ for a general matrix?
Moreover, different orderings of the rows will give different bounds. We use this fact to our advantage
and consider the expectation of this bound over all possible orderings. 
Associated with a permutation $\sigma \in \mathbb{S}_n$ is an ordering of the rows
where $X_j$ is revealed before $X_i$ if $\sigma(j) < \sigma(i)$.
We redefine $Z_i$ and $N_i$ to take the ordering $\sigma$ into account. Let 
$$ Z_i(\sigma) =  \{ j \in R_i: X_{i'} = j \text{ for some } \sigma(i') < \sigma(i) \} .$$ 
$$ N_i(\sigma) = |R_i \smallsetminus Z_i(\sigma)| .$$

Then $N_i(\sigma)$ is the number of available values for $X_i$, given all the variables 
$X_j$ for $j$ such that $\sigma(j) < \sigma(i)$.
As before, using the chain rule we obtain the inequality
$$ H(X) = \sum_{i=1}^n{H(X_i | X_j: \sigma(j) < \sigma(i))} \leq \sum_{i=1}^n{\mathbb{E}_X\left[\log (N_i(\sigma))\right]} .$$

The inequality remains true if we take the expected value of both sides when 
$\sigma$ is a random permutation sampled from the uniform distribution on $\mathbb{S}_n$. 
$$ H(X) \leq  \sum_{i=1}^n{\mathbb{E}_{\sigma}\left[\mathbb{E}_X\left[\log (N_i(\sigma))\right]\right]} = 
\sum_{i=1}^n{\mathbb{E}_X\left[\mathbb{E}_{\sigma}\left[\log (N_i(\sigma))\right]\right]}.$$
Thus, the bound we get on $H(X)$ depends on the distribution of the random variable $N_i(\sigma)$.
The final observation that we need is
that the distribution of $N_i(\sigma)$ is very simple and that it does not depend on $X$.
Consequently we can eliminate the step of taking 
expectation with respect to the choice of $X$. Let us fix a specific $X$.

Let $W_i$ denote the set of $r_i - 1$ row indices $j \ne i$ for which $X_j \in R_i$.
Note that $N_i$ is equal to $r_i$ minus the number of indices in $W_i$ that precede $i$ in the random ordering $\sigma$. 
Since $\sigma$ was chosen uniformly, this number is distributed uniformly in $\{ 0, ... ,r_i-1 \}$. 
Thus, $N_i$ is uniform on the set $\{ 1, ... , r_i \}$. Therefore
$$ \mathbb{E}_{\sigma}\left[\log (N_i(\sigma))\right] = \sum_{k=1}^{r_i}{\frac{1}{r_i} \log (k)} = \frac{1}{r_i} \log(r_i!) .$$
Hence
$$ H(X) \leq \sum_{i=1}^n{\mathbb{E}_{X}\left[\frac{1}{r_i} \log(r_i!) \right]} =
\sum_{i=1}^n{ \frac{1}{r_i} \log(r_i!) }$$
which implies the Br\`{e}gman bound.

\section{The d-dimensional case}

\subsection{An informal discussion}
 
The core of the above-described proof of the Br\`{e}gman bound can be viewed as follows.
Let us pick first a $1$-permutation $X$ that is contained in the matrix $A$
and consider the set $R_i$ of the $r_i$ $1$-entries in $A$'s $i$-th row. There are exactly $r_i$ indices $j$
for which $X_j \in R_i$. The random ordering of the rows determines
which of these will precede the $i$-th row (or will {\em cast its shadow} on the $i$-th row). 
The random number $u_i$ of rows that cast a shadow on the $i$-th row is uniformly distributed
in the range $\{0,\ldots,r_i-1\}$.
The contribution of this row to the upper bound on $H(X)$ is $\mathbb{E}_{\sigma}[\log N_i]$, where $N_i = r_i - u_i$ is the number of $1$-entries in the $i$-th row that are still unshaded. The expectation of $\log N_i$ is exactly $\frac{1}{r_i} \sum_{j=1}^{r_i} \log j = \frac{1}{r_i} \log(r_i!)$. 

How should we modify this argument to deal with $d$-dimensional permutations? We fix a $d$-dimensional
permutation $X$ that is contained in $A$ and consider a random ordering of all lines of the form $A(i_1, ... ,i_d,\ast)$.
Given such an ordering, we use the chain rule to derive an upper bound on $H(X)$. Each ordering yields a different bound. However,
as in the one dimensional case, the key insight is that averaging over all possible orderings (in a class that we
later define) gives us a simple bound on $H(X)$.

The overall structure of the argument remains the same. We consider a concrete line $A(i_1, ... ,i_d,\ast)$. 
Its contribution to the estimate of the entropy is $\log N$ where $N$ is the number of $1$-entries that remain unshaded
at the time (according to the chosen ordering) at which we compute the corresponding term in the chain rule for the entropy.
However, now shade can fall from $d$ different directions. The contribution of the line to the entropy will be the 
expected logarithm of the number of ones that remain unshaded after each of the $d$ dimensions has cast its shade on it.

The lines are ordered by a random lexicographic ordering.
At the coarsest level lines are ordered according to their first coordinate $i_1$. This ordering is chosen uniformly from $\mathbb{S}_n$. To understand how many $1$'s remain unshaded in a given line, we first consider the shade along the first coordinate. If it initially has $r$ 1-entries,
then the number of unshaded 1-entries after this stage is uniformly distributed on $[r]$. We then recurse with the remaining $1$-entries and proceed on the subcube
of codimension $1$ that is defined by the value of the first coordinate.
It is not hard to see how the recursive expression for $f(d,r)$ reflects this calculation. 

\subsection{In detail}

Let $A$ be a $[n]^{d+1}$-dimensional array of zeros
and ones, and $X$ is a random $d$-permutation sampled uniformly from the set of $d$-permutations contained in $A$. 
Then $H(X) = \log(Per_d(A))$ by theorem \ref{entropyThm} and again we seek an upper bound on $H(X)$.

We think of $X$ as an $[n]^d$ array each line of which contains each member of $[n]$ exactly once.
The proof does its accounting using lines of the form $A(i_1, ... ,i_d,\ast)$, i.e.,
lines in which the $(d+1)$-st coordinate varies. Such a line is
specified by $\textbf{i} = (i_1, ... ,i_d)$. The random variable $X_{\textbf{i}}$ is defined to be
the value of $ X(i_1, ... ,i_d) $. We think of the variables $X_{\textbf{i}}$ as being revealed to us one by one.
Thus, $X_{i_1, ... ,i_d}$ must belong to  
$$ R_{\textbf{i}} = R_{i_1 , ... ,i_d} = \{ j: A(i_1, ... ,i_d,j) = 1 \}$$
the set of $1$-entries in this line.

In the proof we scan these lines in a particular randomly chosen order. Let us ignore
this issue for a moment and consider some fixed ordering of these lines. Initially, the
number of $1$-entries in this line is $r_{\textbf{i}}$. As we proceed, some of these $1$'s
become unavailable to $X_{\textbf{i}}$, since choosing them would result in a conflict with
the choice made in some previously revealed line.
We say  that these $1$'s are {\em in the shade} of previously considered lines.
This shade can come from any of the $d$ possible {\em directions}. Thus we denote by
$Z_{\textbf{i}} \subseteq R_{\textbf{i}}$ the set of the indices of the $1$-entries in $R_{\textbf{i}}$ that
are unavailable to $X_{\textbf{i}}$ given the values of the preceding variables.
We can express $Z_{\textbf{i}}=\cup_{k=1}^{d} Z^k_{\textbf{i}}$ where entries in $Z^k_{\textbf{i}}$ are shaded from
direction $k$. Namely, a member $j$ of $R_{\textbf{i}}$ belongs to $Z^k_{\textbf{i}}$ if there is an already scanned line indexed by ${\textbf{i'}}$
with $X_{\textbf{i'}}=j$ and where ${\textbf{i}}$ and  ${\textbf{i'}}$
coincide on all coordinates except the $k$-th.
Thus, given the values of the previously considered variables, there are at most
$$ N_{\textbf{i}} = |R_{\textbf{i}} \smallsetminus Z_{\textbf{i}}|$$
values that are available to $X_{\textbf{i}}$.

We next turn to the random ordering of the lines. 
Now, however, we do not select a completely random ordering,
but opt for a random lexicographic ordering. Namely, we select $d$ random permutations $\sigma_1, ... ,\sigma_d \in \mathbb{S}_d$. The line $A(i_1, ... ,i_d,\ast)$ precedes $A(i_1', ... ,i_d',\ast)$ if there is a $ k \in [n]$ such that $\sigma_k(i_k) < \sigma_k(i_k') $ and $i_j = i_j'$ for all $j < k$. Thus a choice of the orderings $\sigma_k$ induces a total order on the lines $A(i_1, ... ,i_d,\ast)$.
Denote this order by $\prec$. That is, we write $\textbf{i} \prec \textbf{j}$ if $\textbf{i}$ comes before 
$\textbf{j}$. We write $\textbf{i} \prec_k \textbf{j}$ if $\textbf{i} \prec \textbf{j}$ and $\textbf{i}$ and $\textbf{j}$
differ {\em only} in the $k$-th coordinate.

We think of $X_{\textbf{i}}$ as being revealed to us according to this order.

We turn to the definition of $R_{\textbf{i}}$, $Z^{k}_{\textbf{i}}$ and $N_{\textbf{i}}$.
Their definitions are affected by the chosen ordering of the lines. 
In addition, for reasons to be made clear later, we generalize the definition 
of $N_{\textbf{i}}$. It is defined as the number of values available to $X_{\textbf{i}}$ 
(given the preceding lines) from a given 
index set $W \subseteq R_{\textbf{i}}$. 
In the discussion below, we fix $X$, a $d$-dimensional permutation that is contained in $A$.

\begin{definition}
The index set of the $1$-entries in the line $A(i_1, ... ,i_d,\ast)$ is denoted by
$$ R_{\textbf{i}} = R_{i_1 , ... ,i_d} = \{ j: A(i_1, ... ,i_d,j) = 1 \} ,$$
and its cardinality is $r_{\textbf{i}} = |R_{\textbf{i}}| $.\\

Let $W \subseteq R_{\textbf{i}}$ with $\textbf{i} = (i_1, ... ,i_d)$, and suppose that 
$X_{\textbf{i}} \in W$. For a given ordering $\prec$, let 
$$ Z^{k}_{\textbf{i}}(X,\prec) = \{ j \in R_{\textbf{i}}: X_{\textbf{i'}} = j 
\text{ for some } \textbf{i'} \prec_k \textbf{i} \} .$$
$$ N_{\textbf{i}}(W,X,\prec) = | W \smallsetminus \cup_{k=1}^d{Z_{\textbf{i}}^k(X,\prec)} | .$$
\end{definition}

Thus, $N_{\textbf{i}}$ is a function of $W \subseteq R_{\textbf{i}}$, $X$ and the ordering $\prec$.
Each variable $X_{\textbf{i}}$ specifies a $1$ entry of the line $A(i_1, ... ,i_d, \ast)$.
The entry thus specified must conform to the values taken
by the preceding variables. Namely, no line of $X$ can contain more than a single $1$ entry.
We consider the number of values that the variable $X_{\textbf{i}}$ can take, given the values that precede it.
Fix an index tuple $\textbf{i} = (i_1, ... ,i_d)$. The variable $X_{\textbf{i}}$
must specify an index $i_{d+1}$ with 
$A(i_1, ... ,i_{d+1})=1$, i.e., an element of $R_{\textbf{i}}$. Consider some element $j \in R_{\textbf{i}}$.
If $X_{\textbf{i'}} = j$, for some $\textbf{i'} \prec_k \textbf{i}$ and $k \leq d$ then clearly $X_{\textbf{i}} \ne j$, or
else the line $X(i_1, ... ,i_{k-1},\ast,i_{k+1}, ... ,i_d)$ contains more than a single $j$-entry. 
In other words, $X_{\textbf{i}}$ cannot specify an element of $Z^{k}_{\textbf{i}}(X,\prec)$ and is restricted to the set
$R_{\textbf{i}} \smallsetminus \cup_{k=1}^d{Z^{k}_{\textbf{i}}(X,\prec)}$. Therefore, there are at most 
$N_{\textbf{i}}(R_i,X,\prec)$ 
possible values that $X_{\textbf{i}}$ can take given the variables that precede it in the order $\prec$. 

For a given order $\prec$, we can use the chain rule to derive
$$ H(X) = \sum_{\textbf{i}}{H(X_{\textbf{i}}|X_{\textbf{j}} : \textbf{j} \prec \textbf{i})} .$$ 

By theorem \ref{entropyThm},

$$ H(X_{\textbf{i}}|X_{\textbf{j}} : \textbf{j} \prec \textbf{i}) = 
\mathbb{E}_{X_{\textbf{j}} : \textbf{j} \prec \textbf{i}}
\left[H(X_{\textbf{i}} | X_{\textbf{j}} = x_{\textbf{j}}: \textbf{j} \prec \textbf{i})\right] $$
$$ \leq \mathbb{E}_{X_{\textbf{j}} : \textbf{j} \prec \textbf{i}}
\left[\log(N_{\textbf{i}}(R_{\textbf{i}},X,\prec))\right]  
= \mathbb{E}_X\left[\log(N_{\textbf{i}}(R_{\textbf{i}},X,\prec))\right] .$$

The last equality holds because $N_{\textbf{i}}$ depends only on the lines of 
$X$ that precede $X_{\textbf{i}}$, and so taking the expectation over the rest of $X$ doesn't change anything.

As in the one dimensional case, the next step is to take the expectation of both sides of the above inequality over $\prec$.
$$H(X) \leq \sum_{\textbf{i}}{\mathbb{E}_{\prec} \left[\mathbb{E}_X 
\left[\log(N_{\textbf{i}}(R_{\textbf{i}},X,\prec))\right]\right] } $$
$$ = \sum_{\textbf{i}}{\mathbb{E}_X \left[\mathbb{E}_{\prec} 
\left[\log(N_{\textbf{i}}(R_{\textbf{i}},X,\prec))\right]\right] }  .$$ 
The key to unraveling this expression is the insight that the random variable 
$N_{\textbf{i}}$ has a simple distribution (as a function of $\prec$), and moreover, that this distribution does not depend on $X$.

Recall that in the one dimensional case, we obtained the distribution of $N_i$ as follows. Initially, the number of ones in the 
$i$-th row was $r_i$. Then the rows preceding the $i$-th row were revealed, and some of the ones in the $i$-th row became 
unavailable to $X$, because some other row had placed a one in their column. We defined 
$N_i = |R_i \smallsetminus Z_i(\sigma)|$. The size of $Z_i(\sigma)$ was shown to be uniformly distributed over 
$\{0, ... ,r_i-1\}$, and thus the distribution of $N_i$ was shown to be uniform over $\{1, ... ,r_i\}$.

A similar argument works in the $d$ dimensional case, but the distribution of $N_{\textbf{i}}$ 
is no longer uniform. Recall that the function $f$ is defined recursively by
$$ f(0,r) = \log(r)$$ $$f(d,r) = \frac{1}{r}\sum_{k=1}^r{f(d-1,k)}.$$

\begin{claim}
Let $X$ be a $d$-permutation, $\textbf{i} = (i_1, ... ,i_d)$ and let $W \subseteq R_{\textbf{i}}$ be an index set such that $X_{\textbf{i}} \in W$. Then
$\mathbb{E}_{\prec}\left[ \log(N_{\textbf{i}}(W,X,\prec)) \right]$ depends only on $d$ and $r = |W|$, and
$$ \mathbb{E}_{\prec}\left[ \log(N_{\textbf{i}}(W,X,\prec)) \right] = f(d,r) .$$
\label{fStructure}
\end{claim}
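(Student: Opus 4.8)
The plan is to induct on $d$, proving the slightly more general statement in the claim (with an arbitrary $W\subseteq R_{\textbf{i}}$, not just $W=R_{\textbf{i}}$) — this generality is exactly what makes the recursion close. The base case $d=0$ is degenerate: there is a single line and no ordering to speak of, $N_{\textbf{i}}(W,X,\prec)=|W|=r$ always, and indeed $\mathbb{E}_{\prec}[\log N_{\textbf{i}}]=\log r=f(0,r)$.

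For the inductive step, I would split the random lexicographic order $\prec$ according to its first-coordinate ingredient $\sigma_1$ and peel off the shade coming from direction $1$ first. Fix $\textbf{i}=(i_1,\dots,i_d)$ and $W\subseteq R_{\textbf{i}}$ with $|W|=r$ and $X_{\textbf{i}}\in W$. A value $j\in W$ is shaded from direction $1$ precisely when some line $\textbf{i}'=(i_1',i_2,\dots,i_d)$ with $\sigma_1(i_1')<\sigma_1(i_1)$ has $X_{\textbf{i}'}=j$. Because $X$ is a $d$-permutation, the line $X(\ast,i_2,\dots,i_d)$ contains each symbol exactly once; in particular, for each $j\in W$ there is exactly one index $i_1'=i_1'(j)$ (possibly $i_1$ itself, namely for $j=X_{\textbf{i}}$) with $X(i_1',i_2,\dots,i_d)=j$. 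So the set $W$ is in bijection with a set of $r$ distinct first-coordinate indices, one of which is $i_1$ itself. Under the uniformly random $\sigma_1\in\mathbb{S}_n$, the number of these $r$ indices that precede $i_1$ is uniform on $\{0,1,\dots,r-1\}$ — exactly the one-dimensional argument from Section~2. Hence, conditioned on this count being $r-k$, the surviving set $W':=W\smallsetminus Z^1_{\textbf{i}}$ has size exactly $k$, takes value $k$ with probability $1/r$ for each $k\in\{1,\dots,r\}$, and still contains $X_{\textbf{i}}$ (its own index $i_1$ never shades it).

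Now I would argue that, conditioned on $\sigma_1$ (equivalently, on $W'$), the remaining shade — from directions $2,\dots,d$ — is governed by the restriction of the problem to the codimension-$1$ subarray obtained by fixing the first coordinate to $i_1$. Indeed, $Z^k_{\textbf{i}}$ for $k\ge 2$ only involves lines $\textbf{i}'$ agreeing with $\textbf{i}$ in coordinate $1$ (and in all but the $k$-th), and the relevant part of the order on those lines is exactly a random lexicographic order built from $\sigma_2,\dots,\sigma_d$, independent of $\sigma_1$. So within that subarray, $X$ restricts to a $(d-1)$-permutation, $\textbf{i}$ becomes a $(d-1)$-index, $W'$ plays the role of the starting index set, and $N_{\textbf{i}}(W,X,\prec)=N_{\textbf{i}^{(d-1)}}(W',X^{(d-1)},\prec^{(d-1)})$. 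By the induction hypothesis, $\mathbb{E}_{\sigma_2,\dots,\sigma_d}[\log N \mid \sigma_1]=f(d-1,|W'|)$, depending only on $d-1$ and $|W'|$. Taking expectation over $\sigma_1$ and using that $|W'|$ is uniform on $\{1,\dots,r\}$ gives
\[
\mathbb{E}_{\prec}\bigl[\log(N_{\textbf{i}}(W,X,\prec))\bigr]=\frac{1}{r}\sum_{k=1}^r f(d-1,k)=f(d,r),
\]
which is the claim.

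\textbf{Main obstacle.} The routine parts (the $d=0$ case, the uniform-on-$\{0,\dots,r-1\}$ computation) are straightforward given Section~2. The delicate point to write carefully is the reduction to codimension $1$: one must check that conditioning on $\sigma_1$ leaves the directions $2,\dots,d$ with exactly a fresh random lexicographic order on the subarray, that the surviving set $W'$ is the right input to the smaller instance, and that $W'$ is both uniform in size and independent — as a set, not merely in cardinality — of the $\sigma_2,\dots,\sigma_d$ randomness, so that the induction hypothesis (which holds for every fixed $W'$) may be invoked and then averaged. Making the bookkeeping of ``which line shades which entry from which direction'' precise under the peeling is where essentially all the content lies.
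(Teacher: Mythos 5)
Your proposal is correct and follows essentially the same route as the paper's proof: induct on $d$, peel off the direction-$1$ shade via $\sigma_1$ (showing $|W'|$ is uniform on $\{1,\dots,r\}$ by the one-dimensional argument), identify $N_{\textbf{i}}(W,X,\prec)$ with $N_{\textbf{i}'}(W',X',\prec')$ for the restricted $(d-1)$-permutation, and apply the induction hypothesis before averaging over $\sigma_1$. The points you flag as delicate (that $W'$ still contains $X_{\textbf{i}}$, and that $\prec'$ is an independent fresh random lexicographic order) are exactly the ones the paper's proof relies on.
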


\begin{proof}

The proof proceeds by induction on $d$. 

First, note that if $|W| = r$ and $d=0$, then $N_{\textbf{i}}(W,X,\prec) = |W| = r$ by definition, and therefore 
$$\mathbb{E}_{\prec}\left[ \log(N_{\textbf{i}}(W,X,\prec)) \right] = \log(r) = f(0,r) .$$ 

In order to proceed with the induction step, we must describe $N_{\textbf{i}}(W,X,\prec)$ in terms of parameters of dimension $d-1$ 
instead of $d$. To this end we need the following definitions:

\begin{itemize}
 \item $X' = X(i_1, \ast, ... ,\ast)$. Note that $X'$ is a $(d-1)$-dimensional permutation.	
 \item $W' = W \smallsetminus Z_{\textbf{i}}^1(X,\prec)$. Note that $|W'|$ actually depends only on $\sigma_1$, 
the ordering of the first coordinate.
 \item Let $\textbf{i'} = ( i'_1, ... ,i'_{d-1} ) = ( i_2, ... ,i_d )$.
 \item Given an ordering $\prec$, let $\prec'$ be the ordering on the index tuples $( i'_1, ... ,i'_{d-1} )$ defined by the orderings $ \sigma_2, \sigma_3, ... ,\sigma_d $.
\end{itemize}

Note that for every $X$,$W$, $\textbf{i}$ and $\prec$ we have $N_{\textbf{i}}(W,X,\prec) = N_{\textbf{i'}}(W',X',\prec')$. This 
equality follows directly from the definition of $N$. Now,
$$ \mathbb{E}_{\prec}\left[ \log(N_{\textbf{i}}(W,X,\prec)) \right] 
 = \mathbb{E}_{\sigma_1} \left[ \mathbb{E}_{\prec'}\left[ \log(N_{\textbf{i}}(W,X,\prec)) \right] \right]$$
$$ =  \mathbb{E}_{\sigma_1} \left[ \mathbb{E}_{\prec'}\left[ \log(N_{\textbf{i'}}(W',X',\prec')) \right] \right]
= \mathbb{E}_{\sigma_1} \left[ f(d-1,|W'|) \right] $$
The last step follows from the induction hypothesis. Consequently,
$$ \mathbb{E}_{\prec}\left[ \log(N_{\textbf{i}}(W,X,\prec)) \right] = \sum_k{\Pr (|W'|=k) f(d-1,k)} .$$

The only remaining question is to determine the distribution of $|W'|$ as a function of $\sigma_1$.
Note, however, that we have already answered this question 
in the one dimensional proof, namely, $|W'|$ is uniformly distributed on $\{ 1, ... ,r \}$. Indeed, 
$W' = |W \smallsetminus Z_{\textbf{i}}^1(X,\prec)|$, and $Z_{\textbf{i}}^1(X,\prec)$ 
is the set of indices $s$ such that: 
\begin{itemize}
 \item For some $j \in W$ , $X(s,i_2, ... ,i_d) = j$ (there are $r-1$ such indices, one for each $j \in W$).
 \item The random ordering $\sigma_1$ places $s$ before $i_1$.
\end{itemize}
In a random ordering, the position of $i_1$ is uniformly distributed.
Therefore $|Z_{\textbf{i}}^1(X,\prec)|$ is uniformly distributed on $\{0, ... ,r-1\}$, and 
$\Pr(|W'|=k) = \frac{1}{r}$ for every $1 \leq k \leq r$.

Putting this together, we have shown that 
$$   \mathbb{E}_{\prec}\left[ \log(N_{\textbf{i}}(W,X,\prec)) \right] = \frac{1}{r} \sum_{k=1}^r{f(d-1,k)} = f(d,r) .$$

\end{proof}

In conclusion, we have shown that 
$$H(X) \leq \sum_{\textbf{i}}{\mathbb{E}_X \left[\mathbb{E}_{\prec} 
\left[\log(N_{\textbf{i}}(R_{\textbf{i}},X,\prec))\right]\right] } $$
$$ = \sum_{\textbf{i}}{\mathbb{E}_X \left[f(d,r_{\textbf{i}})\right]} 
= \sum_{\textbf{i}}{f(d,r_{\textbf{i}})},$$ where $r_{\textbf{i}} = r_{i_1, ... ,i_d}$
is the number of ones in the vector $A(i_1, ... ,i_d,\ast)$. Therefore,
$$ Per_d(A) \leq \prod_{\textbf{i}}{e^{f(d,r_{\textbf{i}})}} .$$

\section{The number of d-permutations -- An upper bound}

As mentioned, the upper bound on the number of
$d$-dimensional permutations is derived by applying theorem~\ref{generalBregman} to the all-ones array $J$.
The main technical step is a derivation of an upper bound on the function $f(d,r)$.

\begin{theorem}
For every $d$ there exist constants $c_d$ and $r_d$ such that for all $r\geq r_d$,
$$ f(d,r) \leq \log(r) - d + c_d \frac{\log^d(r)}{r} .$$
One possible choice that we adopt here is $r_d = e^d$ for every $d$, $c_1 = 5$, $c_2 = 8$, and 
$c_d = \frac{d^3 (1.1)^d}{d!}$ for $d \geq 3$.
\end{theorem}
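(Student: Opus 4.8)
The plan is to prove the estimate $f(d,r) \le \log(r) - d + c_d \frac{\log^d(r)}{r}$ by induction on $d$, using the recursion $f(d,r) = \frac{1}{r}\sum_{k=1}^r f(d-1,k)$. The base case $d=0$ is trivial since $f(0,r) = \log(r)$, so the real content is the induction step. Assuming the bound for $d-1$, I would write
$$ f(d,r) = \frac{1}{r}\sum_{k=1}^r f(d-1,k) \le \frac{1}{r}\sum_{k=1}^r \left(\log(k) - (d-1) + c_{d-1}\frac{\log^{d-1}(k)}{k}\right) + (\text{correction for small }k), $$
where the correction term accounts for the finitely many $k < r_{d-1}$ on which the inductive bound is not asserted; since $f(d-1,k)$ and $\log(k)-(d-1)$ are both $O(1)$ on this range, this contributes only $O(1/r)$, which is absorbable into the $c_d \log^d(r)/r$ term for $r$ large. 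So the heart of the matter is estimating the three sums $\frac{1}{r}\sum \log(k)$, the constant $-(d-1)$, and $\frac{c_{d-1}}{r}\sum \frac{\log^{d-1}(k)}{k}$.

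For the first sum, the identity $\sum_{k=1}^r \log k = \log(r!)$ together with Stirling's formula gives $\frac{1}{r}\log(r!) = \log(r) - 1 + \frac{\log(2\pi r)}{2r} + O(1/r^2)$, so this produces the main terms $\log(r) - 1$ and, combined with the $-(d-1)$, yields $\log(r) - d$ exactly, plus a benign $O(\frac{\log r}{r})$ remainder from Stirling. For the third sum I would bound $\sum_{k=1}^r \frac{\log^{d-1}(k)}{k}$ by comparison with the integral $\int_1^r \frac{\log^{d-1}(t)}{t}\,dt = \frac{\log^d(r)}{d}$, so that $\frac{c_{d-1}}{r}\sum_{k\le r}\frac{\log^{d-1}(k)}{k} \le \frac{c_{d-1}}{d}\cdot\frac{\log^d(r)}{r}$ up to lower-order corrections. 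Collecting everything, the coefficient of $\frac{\log^d(r)}{r}$ in the resulting bound is roughly $\frac{c_{d-1}}{d}$ plus slack needed to swallow the Stirling remainder (which is $O(\log r / r) = o(\log^d(r)/r)$ for $d\ge 2$, but for $d=1$ must be handled directly — that is why $c_1$ is chosen generously as $5$ and $d=2$ is also tuned by hand as $c_2=8$). For $d \ge 3$ one checks that the recursion $c_d \ge \frac{c_{d-1}}{d} + (\text{slack})$ is satisfied by $c_d = \frac{d^3(1.1)^d}{d!}$: indeed $\frac{c_{d-1}}{d} = \frac{(d-1)^3(1.1)^{d-1}}{d!} = \frac{c_d}{1.1}\cdot\frac{(d-1)^3}{d^3}$, which leaves a comfortable multiplicative gap of order $\frac{1.1 d^3}{(d-1)^3} - 1 = \Omega(1/d)$ times $c_d$, i.e. an additive slack of order $c_d/d = \Omega(\frac{d^2(1.1)^d}{d!})\cdot\frac{\log^d r}{r}$, into which all the $O(\log^{d-1}(r)/r)$-type errors fit once $r \ge r_d = e^d$.

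The main obstacle I anticipate is bookkeeping of the error terms: one must verify at each step that every correction — the small-$k$ discrepancy in the inductive hypothesis, the Stirling remainder, the Euler–Maclaurin defect in replacing $\sum \frac{\log^{d-1}(k)}{k}$ by its integral, and the fact that $\log^j(r)/r \le \log^d(r)/r$ only in the regime $r \ge r_d$ where $\log(r) \ge 1$ — is genuinely dominated by the slack in the $c_d$ recursion. The choice $r_d = e^d$ is exactly what makes $\log(r) \ge d \ge 1$, so that all powers $\log^j(r)$ with $j \le d$ are monotone in the right direction, and it is here that the clean inductive constant propagation becomes possible. The cases $d=1,2$ do not have a large enough power of $\log r$ to hide the Stirling term and so require an explicit (but elementary) computation with the specific constants $c_1 = 5$, $c_2 = 8$; these I would carry out separately as the induction's true base, taking $d \ge 3$ as the generic inductive regime.
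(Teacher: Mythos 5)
Your plan matches the paper's proof essentially step for step: induction on $d$ via the recursion, splitting off the finitely many $k$ below $r_{d-1}$ (an $O(1/r)$ correction), using $\frac{1}{r}\log(r!) \le \log(r) - 1 + \frac{2\log r}{r}$, comparing $\sum_{k\le r}\frac{\log^{d-1}(k)}{k}$ with $\int_1^r \frac{\log^{d-1}(t)}{t}\,dt = \frac{\log^d(r)}{d}$, and closing the induction with a recursion of the form $c_d \ge \frac{c_{d-1}}{d} + \text{slack}$ verified against $c_d = \frac{d^3(1.1)^d}{d!}$. The only cosmetic difference is where the hand-checking ends: the paper verifies the constants directly for $d \le 10$ and runs the generic induction only for $d \ge 10$ (using $(e/d)^d \le d/d!$ there), rather than starting the generic regime at $d=3$, but this does not change the argument.
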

\begin{proof}
A straightforward induction on $d$
yields the weaker bound $f(d,r) \leq \log(r)$ for all $d,r$. For $d=0$ there is
equality and the general case follows since 
$ f(d,r) = \frac{1}{r}\sum_{k=1}^r{f(d-1,k)} \leq \frac{1}{r}\sum_{k=1}^r{\log(k)} \leq \log(r).$
This simple bound serves us to deal with the range of small $r$'s (below $r_{d-1}$).
We turn to the main part of the proof

$$ f(d,r) = \frac{1}{r}\sum_{k=1}^r{f(d-1,k)} = \frac{1}{r}
\left[ \sum_{k=1}^{r_{d-1}}{f(d-1,k)} + \sum_{k=r_{d-1}+1}^{r}{f(d-1,k)} \right] $$
$$\leq \frac{1}{r} \left[ r_{d-1} \log(r_{d-1}) + \sum_{k=1}^{r}{\log(k) - (d-1) + c_{d-1} \frac{\log^{d-1}(k)}{k}} \right] $$
$$ \leq \frac{\xi}{r} + \frac{1}{r}\log(r!) - (d-1) + \frac{c_{d-1}}{r}\sum_{k=1}^r{ \frac{\log^{d-1}(k)}{k} } $$
where $\xi = r_{d-1} \log(r_{d-1}) = (d-1) e^{d-1}$. It is easily verified that for $r \ge r_d \ge 3$
there holds $\log(r!) \leq r \log(r) - r + 2 \log(r)$. We can proceed with
$$ \leq \frac{\xi}{r} + \log(r) + \frac{2 \log(r)}{r} - d + \frac{c_{d-1}}{r}\sum_{k=1}^r{ \frac{\log^{d-1}(k)}{k} }. $$

We now bound the sum $\sum_{k=1}^r{ \frac{\log^{d-1}(k)}{k} }$ by means
of the integral $ \int_1^r{\frac{\log^{d-1}(x)dx}{x}} = \frac{\log^d(r)}{d} $. 
Note that the integrand is unimodal and its maximal value is
$\gamma = \left( \frac{d-1}{e} \right)^{d-1}$.
Thus,
$$ \frac{c_{d-1}}{r}\sum_{k=1}^r{ \frac{\log^{d-1}(k)}{k} } \leq \frac{c_{d-1}}{r} \left( \frac{\log^d(r)}{d} + \gamma \right) .$$
Putting this together, we have the inequality
$$f(d,r) \leq \log(r) - d + \frac{2 \log(r) + \xi + c_{d-1} \left(\gamma + \frac{\log^d(r)}{d} \right) }{r} .$$
Therefore it is sufficient to choose $c_d$ such that for every $r \geq e^d$
$$  2 \log(r) + \xi + c_{d-1} \left(\gamma + \frac{\log^d(r)}{d} \right) \leq c_d \log^d(r) $$
i.e.,
$$ \frac{2}{\log^{d-1}(r)} + \frac{\xi}{\log^d(r)} + 
c_{d-1} \left(\frac{\gamma}{\log^d(r)} + \frac{1}{d} \right) \leq c_d.$$
The left hand side of the above inequality is clearly a decreasing function of $r$. Therefore it is sufficient to verify 
the inequality for $r = e^d$. Plugging this and the values of the constants $\xi$ and $\gamma$ into the left hand 
side of the above inequality, we get
$$ \frac{2}{d^{d-1}} + \frac{(d-1) e^{d-1}}{d^d} + c_{d-1} \left( \frac{(d-1)^{d-1}}{e^{d-1} d^d} + \frac{1}{d}\right)  $$
$$ \leq \left( 1 + \frac{1}{e^{d-1}} \right) \frac{ c_{d-1} }{d} +
d \left( \frac{2}{d^d} + \left(\frac{e}{d}\right)^d \right) .$$
Thus, we may take 
$$ c_d = \left( 1 + \frac{1}{e^{d-1}} \right) \frac{ c_{d-1} }{d} +
d \left( \frac{2}{d^d} + \left(\frac{e}{d}\right)^d \right) . $$
Calculating $c_d$ using this recursion and the fact that $c_0 = 0$, we get that $c_1 = 2 + e \leq 5$, $c_2 \leq 8 $, and $c_d \leq \frac{d^3 (1.1)^d}{d!}$ for $3\leq d \leq 10$. 
Proceeding by induction,
$$ c_d = \left( 1 + \frac{1}{e^{d-1}} \right) \frac{ (d-1)^3 (1.1)^{d-1} }{d!} + 
d \left( \frac{2}{d^d} + \left(\frac{e}{d}\right)^d \right) $$
$$ \leq \frac{(1.1)^d (d-1)^3 }{d!} + 2 d \left(\frac{e}{d}\right)^d \leq \frac{(1.1)^d (d-1)^3 + 2 d^2}{d!} \leq 
\frac{(1.1)^d d^3}{d!} . $$
In the inequality before the last one, we used the fact that for $d \geq 10$ , $  \left( \frac{e}{d} \right)^d \leq \frac{d}{d!} $.

\end{proof}

For the $[n]^{d+1}$ all ones array $J$, $r_{i_1, ... ,i_d} = n$ for every tuple $(i_1, ... ,i_d)$, and so for large enough $n$ 
we have the bound
$$ Per_d(J) \leq \prod_{i_1, ... ,i_d}{e^{f(d,n)}} = \left(e^{f(d,n)}\right)^{n^d} \leq 
\left(  \exp\left[\log(n) - d + c_d \frac{\log^d(n)}{n}\right] \right)^{n^d}. $$
For a constant $d$, letting $n$ go to infinity, $c_d \frac{\log^d(n)}{n} = o(1)$ and therefore the number of $d$ permutations is at most 
$$\left((1 + o(1)) \frac{n}{e^d} \right)^{n^d} .$$

\end{document}